\newtheorem{remark}{Remark}[section]
\newtheorem{theo}{Theorem}[section]
\newtheorem{lemma}{Lemma}[section]
\newcommand{\Q}{\mathbb{Q}}
\newcommand{\Z}{\mathbb{Z}}
\newcommand{\R}{\mathbb{R}}
\newcommand{\dsum}{\displaystyle\sum}
\newcommand{\dlim}{\displaystyle\lim}
\newcommand{\dprod}{\displaystyle\prod}
\newcommand{\dbigcup}{\displaystyle\bigcup}
\title[Short Rational function for MOILP]{Short rational generating functions for multiobjective linear integer programming}
\author{V\'ictor Blanco and Justo Puerto}
\date{Feb-20-2007}
\address{Departamento de Estad\'istica e Investigaci\'on Operativa, Universidad
de Sevilla, 41012 Sevilla, Spain}
\thanks{Facultad de Matem\'aticas.
Universidad de Sevilla. 41012 Seville, SPAIN.}
\email{vblanco@us.es\\
puerto@us.es}
\keywords{Multiple objective optimization, integer programming,
generating functions}
\subjclass[2000]{90C92, 90C10, 05A15}
\begin{document}

\maketitle

\begin{abstract}
This paper presents algorithms for solving multiobjective integer
programming problems. The algorithm uses Barvinok's rational
functions of the polytope that defines the feasible region and
provides as output the entire set of nondominated solutions for
the problem. Theoretical complexity results on the algorithm are
provided in the paper. Specifically, we prove that encoding the
entire set of nondominated solutions of the problem is
polynomially doable, when the dimension of the decision space is
fixed. In addition, we provide polynomial delay algorithms for
enumerating this set. An implementation of the algorithm shows
that it is useful for solving multiobjective integer linear
programs.
\end{abstract}

\section*{Introduction}
Short rational functions were used by Barvinok \cite{barvinok94}
as a tool to develop an algorithm for counting the number of
integer points inside convex polytopes, based in the previous
geometrical papers by Brion \cite{brion88},Khovanskii and Puhlikov
\cite{khovanskii-puhlikov92}, and Lawrence \cite{lawrence91}. The
main idea is encoding those integral points in a rational function
in as many variables as the dimension of the space where the body
lives. Let $P \subset \R^d$ be a given convex polyhedron, the
integral points may be expressed in a formal sum $f(P,z) =
\sum_\alpha z^\alpha$ with $\alpha = (\alpha_1, \ldots, \alpha_d)
\in P\cap\Z^d$, where $z^\alpha = z_1^{\alpha_1}\cdots
z_d^{\alpha_d}$. Barvinok's aimed objective was representing that
formal sum of monomials in the multivariate polynomial ring
$\Z[z_1, \ldots, z_n]$, as a ``short'' sum of rational functions
in the same variables. Actually, Barvinok presented a
polynomial-time algorithm when the dimension, $n$, is fixed, to
compute those functions. A clear example is the polytope $P =
[0,N] \subset \R$: the long expression of the generating function
is $f(P,z) = \sum_{i=0}^N z^i$, and it is easy to see that its
representation as sum of rational functions is the well known
formula $\frac{1-z^{N+1}}{1-z}$.

 Brion proved in 1988 \cite{brion88}, that for computing the short
 generating function of the formal sum associated to a polyhedron, it is enough
 to do it for tangent cones at each vertex of $P$. Barvinok applied this function
  to count the number of integral points inside a polyhedron $P$, that
 is, $\lim_{z\rightarrow (1, \ldots,1)} f(P,z)$, that is not possible
 to compute using the original expression, but it may be
 obtained using tools from complex analysis over the rational function $f$.

The above approach, apart from counting lattice points, has been
used to develop some algorithms to solve, exactly, integer
programming. Actually, De Loera et al \cite{deloera04} and Woods
and Yoshida \cite{woods-yoshida05} presented different methods to
solve this family of problems using Barvinok's rational function
of the polytope defined by the constraints of the given problem.

The goal of this paper is to present new methods for solving
multiobjective integer programming problems. In contrast to usual
integer programming problems, in multiobjective problems there are
at least two (and in this case the problem is called biobjective)
or more objective functions to be optimized.

The importance of multiobjective optimization is not only due to
its theoretical implications but also to its many applications.
Witnesses of that are the large number of real-world decision
problems that appear in the literature formulated as
multiobjective programs. Examples of them are flowshop scheduling
(see ~\cite{ishibuchi-murata98}), analysis in Finance (see
\cite{ehrgott02}, Chapter 20), railway network infrastructure
capacity (see \cite{delorme03}), vehicle routing problems (see
\cite{jozefowiez04, sherbeny01}) or trip organization (see
\cite{steuer85}).

 Multiobjective programs are formulated as
optimization (we restrict ourselves without lost of generality to
the maximization case) problems over feasible regions with at
least two objective functions. Usually, it is not possible to
maximize all the objective functions simultaneously since
objective functions induce a partial order over the vectors in the
feasible region, so a different notion of solution is needed. A
feasible vector is said to be a nondominated (or Pareto optimal)
solution if no other feasible vector has componentwise larger
objective values. The evaluation through the objectives of a
nondominated solution is called efficient solution.

 This paper studies multiobjective integer linear programs (MOILP). Thus, we
assume that there are at least two objective functions involved,
the constraints that define the feasible region are linear, and
the feasible vectors are integers.

Even if we assume that the objective functions are also linear,
there are nowadays relatively few exact methods to solve general
multiobjective integer and linear problems (see \cite{ehrgott02}).
Some of them, as branch and bound with bound sets, which belong to
the class of implicit enumeration methods, combine optimality of
the returned solutions with adaptability to a wide range of
problems (see for example \cite{zionts79, zionts-wallenius80,
marcotte86, mavrotas98} for details). Other methods, as Dynamic
Programming, are general methods for solving, not very
efficiently, general families of optimization problems (see
\cite{karwan-villareal82, daellenbach80}). A different approach,
as the Two-Phase method (see \cite{ulungu-teghem95}), looks for
supported solutions (those that can be found as solutions of a
single-objective problem over the same feasible region but with
objective function a linear combination of the original
objectives) in a first stage and non-supported solutions are found
in a second phase using the supported ones. The Two-phase method
combines usual single-criteria methods with specific
multiobjective techniques.

Apart from those generic methods, there are specific algorithms
for solving some combinatorial biobjective problems: biobjective
knapsacks (\cite{visee98}), biobjective minimum spanning tree
problems (\cite{steiner-radzik08}) or biobjective assignment
problems (\cite{ehrgott08}), as well as heuristics and
metaheuristics algorithms that decrease the CPU time for computing
the nondominated solutions for specific biobjective problems.

Nowadays, new approaches for solving multiobjective problems,
using tools from Algebraic Geometry and Computational Algebra,
have been proposed in the literature aiming to provide new
insights into the combinatorial structure of the problems. This
new research line seems to be prolific in a near future. An
example of that is presented in \cite{blanco-puerto07} where a
notion of partial Gr\"obner basis is given that allows to build a
test family (analogous to the test set concept but for solving
multiobjective problems) to solve general multiobjective linear
integer programming problems.

Another witness of this trend is the recent work by Deloera et al.
\cite{deloera08}. In this paper, the authors present several
algorithms for multiobjective integer linear programs using
generating functions. Nevertheless, their approach differs from
ours in that their requires, in addition, to fix the dimension of
the objective space to prove polynomiality of their algorithms,
and their proofs are totally different. Moreover, no actual
implementation of the algorithms is shown in that paper although
it addresses an interesting shortest distance problems respect to
a prespecified Pareto point.

In this paper, we also use rational generating function of
polytopes for solving multiobjective integer linear programs.

In Section 1, the main results on Barvinok's rational functions,
which we use in our approach, are presented. Section 2 presents
the multiobjective integer problem and the notion of dominance in
order to clarify which kind of solutions we are looking for. The
two following sections analyze different algorithms for solving
general multiobjective problems. In Section 3, fixing the
dimension of the decision space, a polynomial time algorithm that
encodes the set of nondominated solutions of the problem as a
short sum of rational functions is detailed. Next, a digging
algorithm that computes the entire set of nondominated solutions
using the multivariate Laurent expansion for the Barvinok's
function of the polytope defined by the constraints of the problem
is given in Section 4. In that section, a polynomial delay
algorithm for solving multiobjective problems is also presented
(fixing only the dimension of the decision space).

Section 5 shows the results of a computational experiment and its
analysis. Here, we solve biobjective knapsack problems, report on
the performance of the algorithms and draw some conclusions on
their results and their implications.

\section{Barvinok's rational functions}
\label{barvinok}

In this section, we recall some results on short rational
functions for polytopes, that we use in our development. For
details the interested reader is referred to \cite{barvinok94,
barvinok99, barvinok03}.

Let $P = \{x \in \R^n: A\,x \leq b\}$ be a rational polytope in
$\R^n$. The main idea of Barvinok's Theory was encoding the
integer points inside a rational polytope in a ``long" sum of
monomials:
$$
f(P;z) = \dsum_{\alpha\in P\cap\Z^n}\,z^\alpha
$$
where $z^\alpha = z_1^{\alpha_1}\cdots z_n^{\alpha_n}$.

The following results, due to Barvinok, allow us to re-encode, in
polynomial-time for fixed dimension, these integer points in a
``short" sum of rational functions.

\begin{theo}[Theorem 5.4 in \cite{barvinok94}]
\label{theo:barvinok} Assume $n$, the dimension, is fixed. Given a
rational polyhedron $P \subset \R^n$ , the generating function
$f(P;z)$ can be computed in polynomial time in the form
$$
f(P;z) = \dsum_{i\in I} \varepsilon_i
\dfrac{z^{u_i}}{\dprod_{j=1}^n (1-z^{v_{ij}})}
$$
where $I$ is a polynomial-size indexing set, and where
$\varepsilon \in \{1,-1\}$ and $u_i , v_{ij} \in \Z^n$ for all $i$
and $j$.
\end{theo}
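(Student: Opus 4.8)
The plan is to prove Barvinok's theorem by combining three ingredients: Brion's theorem reducing the computation to tangent cones at vertices, Barvinok's signed decomposition of cones into unimodular (simple) cones, and the explicit rational-function formula for unimodular cones. I would first reduce the polytope $P$ to a collection of supporting cones, then decompose each cone, and finally sum the resulting unimodular pieces.

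First I would invoke Brion's theorem, cited in the excerpt, which states that
$$
f(P;z) = \dsum_{v\in V(P)} f(\mathrm{Co}(P,v);z),
$$
where $V(P)$ is the vertex set of $P$ and $\mathrm{Co}(P,v)$ is the tangent (supporting) cone of $P$ at the vertex $v$. This identity holds as an equality of rational functions after meromorphic continuation, even though the individual cone series diverge on overlapping regions. Since $P$ is a rational polytope in fixed dimension $n$, the number of vertices is bounded by a polynomial in the number of facets, and each tangent cone is a simplicial (or triangulable) rational cone that can be computed from the active constraints at $v$. This step alone does not yet give a short formula, because a general rational cone may have many generators; the generating function of a cone with $m$ generators in dimension $n$ is not directly a product of $n$ binomial factors unless the cone is unimodular.

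The heart of the argument, and the step I expect to be the main obstacle, is Barvinok's signed decomposition into unimodular cones. For a simplicial cone $K$ generated by $v_1,\ldots,v_n$, let the index of $K$ be $|\det(v_1,\ldots,v_n)|$. When this index is $1$ the cone is unimodular and
$$
f(K;z) = \dfrac{z^{u}}{\dprod_{j=1}^n (1-z^{v_{j}})}
$$
for an appropriate vertex shift $u$, giving exactly the desired form. When the index exceeds $1$, the idea is to pick a short lattice vector $w$ (found via lattice basis reduction / the shortest-vector machinery available in fixed dimension) and write $K$ as a signed sum of cones of strictly smaller index, each obtained by replacing one generator of $K$ by $w$. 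The crucial quantitative claim is that this recursion terminates after only $O(\log(\text{index}))$ levels and produces polynomially many unimodular cones, because each subdivision reduces the index by at least a fixed power; this is precisely where fixed dimension $n$ is indispensable, since the branching factor and the lattice-point bounds blow up exponentially with $n$. I would need to verify that the number of cones at each level stays polynomial and that the $u_i$, $v_{ij}$ produced remain integer vectors of polynomially bounded encoding length.

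Finally, I would assemble the pieces: triangulate each tangent cone into simplicial cones (a polynomial-size operation in fixed dimension), apply the signed unimodular decomposition to each simplicial cone, and collect all the resulting terms. Each unimodular cone contributes one summand of the form $\varepsilon_i \, z^{u_i}/\dprod_{j=1}^n(1-z^{v_{ij}})$ with $\varepsilon_i\in\{1,-1\}$ recording the accumulated sign from Brion and from the inclusion-exclusion in the decomposition. Because every stage---vertex enumeration, triangulation, and unimodular decomposition---produces only polynomially many objects when $n$ is fixed, the total indexing set $I$ has polynomial size and the whole computation runs in polynomial time, which is exactly the assertion of the theorem.
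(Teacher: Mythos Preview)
The paper does not supply its own proof of this statement: Theorem~\ref{theo:barvinok} is quoted verbatim from Barvinok's original article \cite{barvinok94} as background material, and the authors simply invoke it. So there is no ``paper's proof'' to compare against.

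That said, your sketch is the standard and correct outline of Barvinok's argument. The three ingredients you name---Brion's reduction to tangent cones at vertices, triangulation into simplicial cones, and the signed decomposition into unimodular cones via a short lattice vector found by LLL/Minkowski---are exactly the machinery of \cite{barvinok94}, and the explicit formula $z^{u}/\prod_{j}(1-z^{v_j})$ for a unimodular cone is the terminal step. A couple of small points to tighten if you flesh this out: the index-reduction step is usually phrased so that the new index is at most roughly the $n$-th root of the old one (this is where Minkowski's theorem on lattice minima enters), which is what makes the recursion depth $O(n\log\log(\text{index}))$ and the total cone count polynomial in fixed $n$; and one must handle the lower-dimensional faces arising in the inclusion--exclusion, which Barvinok does either by perturbation or by working modulo rational functions supported on proper subspaces. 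None of this affects the validity of your plan, which matches the original proof.
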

As a corollary of this result, Barvinok gave an algorithm for
counting the number of integer points in $P$. It is clear from the
original expression of $f(P;z)$ that this number is
$f(P;\mathbf{1})$, but $\mathbf{1}=(1, \ldots, 1)$ is a pole for
the rational function, so, the number of integer points in the
polyhedron is $\dlim_{z\rightarrow \mathbf{1}} f(S;z)$. This limit
can be computed using residue calculation tools from elementary
complex analysis.

Another useful result due to Barvinok and Wood \cite{barvinok03},
states that computing the short rational function of the
intersection of two polytopes, given the respective short rational
function for each polytope, is doable in polynomial time.

\begin{theo}[Theorem 3.6 in \cite{barvinok03}]\label{theo:intersect}
Let $P_1$, $P_2$ be polytopes in $\R^n$ and $P=P_1 \cap P_2$. Let
$f(P_1; z)$ and $f(P_2; z)$ be their short rational functions with
at most $k$ binomials in each denominator. Then there exists a
polynomial time algorithm that computes
$$
f(P;z) = \dsum_{i\in I} \gamma_i \dfrac{z^{u_i}}{\dprod_{j=1}^s
(1-z^{v_{ij}})}
$$
with $s\leq 2k$, where the $\gamma_i$ are rational numbers and
$u_i$, $v_{ij}$ are nonzero integral vectors for $i \in I$ and
$j=1, \ldots, s$.
\end{theo}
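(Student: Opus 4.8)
The plan is to identify $f(P;z)$ with the \emph{Hadamard product} (coefficientwise product) of $f(P_1;z)$ and $f(P_2;z)$, and then to show that this product can be built efficiently from the two short forms. The starting observation is that a monomial $z^\alpha$ occurs in the long expansion of $f(P;z)$ exactly when $\alpha \in P_1 \cap P_2 \cap \Z^n$, that is, exactly when $\alpha$ occurs in both $f(P_1;z)$ and $f(P_2;z)$. Since all these coefficients are $0$ or $1$, we have $f(P;z) = \dsum_\alpha \big([z^\alpha]f(P_1;z)\big)\big([z^\alpha]f(P_2;z)\big)\, z^\alpha$, which is precisely the Hadamard product. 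Thus the theorem reduces to the statement that the Hadamard product of two short rational functions is again a short rational function, computable in polynomial time for fixed $n$, with denominator length bounded as claimed.

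Next I would use bilinearity of the Hadamard product to reduce to a single pair of ``atoms''. By Theorem~\ref{theo:barvinok} each $f(P_i;z)$ is a sum of polynomially many terms $\varepsilon\, z^{u}/\dprod_{j=1}^k(1-z^{v_j})$, and the Hadamard product distributes over these sums. Hence it suffices to compute the Hadamard product of two atoms $z^{a}/\dprod_j(1-z^{c_j})$ and $z^{b}/\dprod_j(1-z^{d_j})$ and then sum over the polynomially many pairs, which keeps the overall size polynomial.

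For the core computation, expanding each atom as a sum over a unimodular cone gives $z^{a}/\dprod_j(1-z^{c_j}) = \dsum_{\lambda\geq 0} z^{a+C\lambda}$, where $C$ is the matrix with columns $c_j$, and similarly for the second atom with matrix $D$. A monomial $z^\alpha$ survives in the Hadamard product precisely when the system $C\lambda = \alpha-a$, $D\mu = \alpha-b$, $\lambda,\mu\geq 0$ is solvable. I would therefore introduce the auxiliary polyhedron $Q = \{(\alpha,\lambda,\mu) : C\lambda=\alpha-a,\ D\mu=\alpha-b,\ \lambda,\mu\geq 0\}\subset \R^{n+2k}$, compute $f(Q;x,s,t)$ by Theorem~\ref{theo:barvinok} (still fixed dimension), and project onto the $\alpha$-coordinates by specializing the auxiliary variables $s,t\to\mathbf{1}$. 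Because Barvinok's cones are unimodular, $\lambda$ and $\mu$ are uniquely determined by $\alpha$, so every fibre of this projection is a single lattice point; the specialization is then well defined through the same residue/limit calculus used in the counting corollary, and it produces exactly $\dsum_\alpha z^\alpha$ over the image. Geometrically the image is the intersection of the two shifted simplicial cones $a+\mathrm{cone}(C)$ and $b+\mathrm{cone}(D)$, a polyhedron with at most $2k$ extreme rays, which is what yields the bound $s\leq 2k$ on the denominator length.

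The hard part will be the projection step and the accompanying bookkeeping. Passing from $f(Q;x,s,t)$ to the generating function of the $\alpha$-image is not a naive substitution $s=t=\mathbf{1}$, which is a pole of the rational function; it must be carried out as a limit whose existence rests on the fibres being finite, and one must certify that the number of binomials surviving in each denominator never exceeds $2k$ and that the number of terms produced over all pairs stays polynomially bounded. Controlling this denominator growth, rather than merely establishing existence of the product, is where the real work lies.
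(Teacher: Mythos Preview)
Your high-level plan matches the paper's exactly: the paper states (without proof, since this is a cited result) that $f(P_1\cap P_2;z)$ is the Hadamard product $f(P_1;z)\ast f(P_2;z)$, that bilinearity reduces everything to pairs of elementary fractions, and that the atom-by-atom product is handled by Lemma~3.4 of Barvinok--Woods. So through the first two paragraphs you are on the same track.

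Where you diverge is in how the Hadamard product of two atoms is computed. The paper (following Barvinok--Woods) invokes Lemma~3.4, a direct algebraic identity that rewrites the product of two $k$-factor fractions as a short sum with at most $2k$ factors per denominator; the only side condition is the existence of a vector $l$ with $\langle l,a_{ij}\rangle<0$, which is arranged by trivial sign changes. Your route instead lifts to an auxiliary polyhedron $Q\subset\R^{n+2k}$, applies Theorem~\ref{theo:barvinok}, and then specializes the $\lambda,\mu$ variables to~$1$. This is a legitimate alternative and, since the fibres are singletons (unimodularity), the specialization does compute the right thing.

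There is, however, a genuine gap in your justification of the bound $s\le 2k$. You assert that the image---the intersection of the two shifted simplicial cones---has ``at most $2k$ extreme rays''. That is not correct: the intersection of two simplicial cones with $k$ generators each has at most $2k$ \emph{facets}, but its number of extreme rays is governed by the upper bound theorem and can exceed $2k$ (it is only polynomial because $n$ is fixed). More importantly, even if the ray count were $2k$, neither applying Barvinok to $Q$ nor invoking Lemma~\ref{lemma:proj} for the projection yields denominators with exactly $2k$ factors: Barvinok on $Q$ gives denominators of length $\dim Q$, and the projection lemma gives a bound $s(n)$ that depends only on $n$, not $2k$. In Barvinok--Woods the bound $s\le 2k$ is a by-product of the explicit combinatorial formula in Lemma~3.4, not a geometric fact about the intersection cone. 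Your approach would still produce a polynomial-time algorithm with \emph{some} fixed $s$, but not the stated $2k$; to recover that specific bound you would need to abandon the auxiliary-polyhedron detour and work out the algebraic identity directly, which is what the cited lemma does.
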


In the proof of the above theorem, the Hadamard product of a pair
of power series is used. Given $g_1(z) =
\dsum_{m\in\Z^d}\beta_{m}\,z^m$ and $g_2(z) =
\dsum_{m\in\Z^d}\gamma_{m}\,z^m$, the Hadamard product $g = g_1
\ast g_2$ is the power series
$$
g(z) = \dsum_{m \in \Z^n} \eta_m\,z^m \qquad \text{where $\eta_m =
\beta_{m}\gamma_{m}$}.
$$

The following Lemma is instrumental to prove Theorem
\ref{theo:intersect}.

\begin{lemma}[Lemma 3.4 in \cite{barvinok03}]
Let us fix $k$. Then there exists a polynomial time algorithm,
which, given functions $g_1(z)$ and $g_2(z)$ such that
\begin{equation}
\label{cond:lemma} g_1(z) = \dfrac{z^{p_1}}{(1-z^{a_{11}})\cdots
(1-z^{a_{1k}})} \quad and \quad g_2(z) =
\dfrac{z^{p_2}}{(1-z^{a_{21}})\cdots (1-z^{a_{2k}})}
\end{equation}
where $p_i, a_{ij} \in \Z^d$ and such that there exists $l \in
\Z^l$ with $\langle l, a_{ij} \rangle <0$ for all $i,j$, computes
a function $h(z)$ in the form
$$
h(z) = \dsum_{i\in I} \beta_i \dfrac{z^{q_i}}{(1-z^{b_{i1}})\cdots
(1-z^{b_is})}
$$
with $q_i, b_{ij} \in \Z^d$, $\beta_i \in \Q$ and $s\leq 2k$ such
that $h$ possesses the Laurent expansion in a neighborhood $U$ of
$z_0 = (e^{l_1}, \ldots, e^{l_n})$ and $h(z) = g_1(z)\ast g_2(z)$.
\end{lemma}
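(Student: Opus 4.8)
The plan is to compute the Hadamard product $g_1 \ast g_2$ by extracting coefficients from each factor via a geometric-series expansion, recombining them, and then recognizing the result as a short sum of rational functions. Let me think carefully about how this should go.

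First, the key structural observation: each $g_i$ is a product of $k$ factors of the form $\frac{1}{1-z^{a}}$. The condition that there exists $\ell$ with $\langle \ell, a_{ij}\rangle < 0$ is crucial—it tells us which "direction" to expand the geometric series. If $\langle \ell, a\rangle < 0$, then near $z_0 = (e^{\ell_1},\ldots,e^{\ell_n})$... wait, let me reconsider. We have $z^a = z_1^{a_1}\cdots z_n^{a_n}$. At $z_0 = (e^{\ell_1},\ldots)$, we get $z_0^a = e^{\langle \ell, a\rangle}$. If $\langle \ell, a\rangle < 0$ then $|z_0^a| = e^{\langle \ell,a\rangle} < 1$, so $\frac{1}{1-z^a} = \sum_{m\geq 0} z^{ma}$ converges in a neighborhood of $z_0$. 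Good. So the condition ensures all factors can be expanded as convergent geometric series in the same neighborhood $U$.

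So the Laurent expansion of $g_1$ is obtained by multiplying out $z^{p_1}\prod_j \sum_{m_j\geq 0} z^{m_j a_{1j}}$, giving a sum over nonnegative integer vectors $(m_1,\ldots,m_k)$ of $z^{p_1 + \sum_j m_j a_{1j}}$. The coefficient $\beta_m$ of $z^m$ in $g_1$ counts the number of ways to write $m - p_1 = \sum_j m_j a_{1j}$ with $m_j \geq 0$—i.e., it's the number of lattice points in a certain polytope (a slice of a cone). Similarly for $g_2$.

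Now the Hadamard product has coefficient $\eta_m = \beta_m \gamma_m$. The key trick—which I expect is Barvinok's—is to realize $\eta_m$ as a lattice-point count in a higher-dimensional object. Specifically, $\eta_m$ counts pairs of representations: pairs of nonnegative vectors $(m^{(1)}, m^{(2)}) \in \Z_{\geq 0}^k \times \Z_{\geq 0}^k$ with $p_1 + \sum_j m^{(1)}_j a_{1j} = m = p_2 + \sum_j m^{(2)}_j a_{2j}$. So if I introduce the cone $C$ in $\R^{2k}$ consisting of $(m^{(1)}, m^{(2)})$ subject to the linear constraint $p_1 + \sum_j m^{(1)}_j a_{1j} = p_2 + \sum_j m^{(2)}_j a_{2j}$ (an affine subspace intersected with the positive orthant), then $g(z) = \sum_m \eta_m z^m$ is precisely the generating function $\sum_{(m^{(1)},m^{(2)})\in C\cap\Z^{2k}} z^{\,p_1 + \sum_j m^{(1)}_j a_{1j}}$, where the exponent is read off via the (fixed, since $k$ is fixed) linear map sending $(m^{(1)},m^{(2)})$ to the common value of $m$. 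I would write this as $f$ of a polytope/polyhedron in fixed dimension $2k$, then apply Theorem~\ref{theo:barvinok} (or its polyhedral version) to compute a short rational function in the $2k$ variables, and finally perform a monomial substitution to collapse back to the $n$ original variables $z$. Because $k$ is fixed, the ambient dimension $2k$ is fixed, so every step runs in polynomial time, and the resulting denominators have at most $2k$ binomial factors as claimed.

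\medskip

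The main obstacle I anticipate is making the reduction to a lattice-point generating function fully rigorous while controlling the number of binomials. Two technical points need care. First, the sign condition $\langle \ell, a_{ij}\rangle < 0$ must be shown to guarantee not only termwise convergence but that the formal manipulation of the triple sum (expanding, reindexing as a lattice count, resumming via Barvinok) is valid as an honest Laurent expansion on a common neighborhood $U$ of $z_0$; this requires checking that the region of absolute convergence of all the relevant geometric series genuinely contains such a $U$. Second, and more delicate, is the final monomial substitution $z^{w} \mapsto z^{\phi(w)}$ that sends the $2k$-variable short rational function back to the $n$ variables: one must verify that this substitution does not produce a pole (it could send some binomial $1-z^{b}$ with $b \neq 0$ in $\R^{2k}$ to $1 - z^0 = 0$ if $\phi(b) = 0$), and if it does, one must argue—again using the sign/convergence condition and a perturbation of the evaluation point in the $\ell$ direction—that the substitution can be carried out consistently so that the $s \leq 2k$ denominator factors survive as genuine nonzero binomials. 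Handling these potentially vanishing exponents, essentially by choosing a generic specialization vector compatible with $\ell$, is where the real work lies; the counting and the appeal to Theorem~\ref{theo:barvinok} are then routine given the fixed dimension.
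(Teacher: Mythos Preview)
The paper does not actually prove this lemma: it is quoted verbatim as Lemma~3.4 of Barvinok--Woods \cite{barvinok03} and used as a black box to obtain Theorem~\ref{theo:intersect}. So there is no ``paper's own proof'' to compare against.

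That said, your sketch is essentially the argument given in the original Barvinok--Woods paper. The core idea---interpreting the Hadamard coefficient $\eta_m=\beta_m\gamma_m$ as the number of lattice points in the $2k$-dimensional polyhedron
\[
\Big\{(m^{(1)},m^{(2)})\in\Z_{\ge 0}^{k}\times\Z_{\ge 0}^{k}:\ p_1+\textstyle\sum_j m^{(1)}_j a_{1j}=p_2+\sum_j m^{(2)}_j a_{2j}\Big\},
\]
and then invoking Barvinok's algorithm in fixed dimension $2k$ followed by a monomial substitution back to the $z$-variables---is exactly their reduction, and it correctly explains the bound $s\le 2k$ on the number of denominator binomials. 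Your identification of the two delicate points (common convergence on a neighborhood of $z_0$ via the sign condition on $\ell$, and handling possible vanishing of $1-z^{b}$ under the specialization) is accurate; in \cite{barvinok03} the first is dealt with precisely as you indicate, and the second by choosing a generic $\ell$ and, if needed, the identity $\frac{1}{1-\xi}=\frac{1}{1-\xi^2}+\frac{\xi}{1-\xi^2}$ to perturb exponents, so nothing is missing from your outline.
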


For proving Theorem \ref{theo:intersect}, it is enough to assure
that for given polytopes $P_1, P_2 \subseteq Z^n$, their rational
functions satisfy conditions \eqref{cond:lemma}. It is not
difficult to ensure that the conditions are verified after some
changes are done in the expressions for the short rational
functions (for further details, the interested reader is referred
to \cite{barvinok03}).

Actually, with this result a general theorem can be proved
ensuring that for a pair of polytopes, $P_1, P_2 \subseteq \Z^n$,
there exists a polynomial time algorithm to compute, given the
rational functions for $P_1$ and $P_2$, the short rational
function of any boolean combination of $P_1$ and $P_2$.

Finally, we recall that one can find, in polynomial time, rational
functions for polytopes that are images of polytopes with known
rational function.

\begin{lemma}[Theorem 1.7 in \cite{barvinok94}]
\label{lemma:proj} Let us fix $n$. There exists a number $s=s(n)$
 and a polynomial time algorithm, which, given a rational polytope $P\subseteq \R^n$ and a
linear transformation $T: \R^n \rightarrow \R^r$ such that
$T(\Z^n) \subseteq \Z^r$, computes the function $f(S;z)$ for $S =
T(P \cap \Z^n)$, $S\subseteq \Z^r$ in the form
$$
f(S;z) = \dsum_{i\in I} \alpha_i
\dfrac{z^{p_i}}{(1-z^{a_{i1}})\cdots (1-z^{a_is})}
$$
where $\alpha_i \in \Q$, $p_i, a_{ij} \in \Z^r$ and $a_{ij}\neq 0$
for all $i,j$.
\end{lemma}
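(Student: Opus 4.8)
The plan is to reduce the projection to a sequence of elementary operations on short rational functions that the tools of Section~\ref{barvinok} already control. First I would compute the short rational function $f(P;x)=\dsum_{\alpha\in P\cap\Z^n}x^\alpha$ by Theorem~\ref{theo:barvinok}, in polynomial time since $n$ is fixed. The difficulty is that $T$ need not be injective on $P\cap\Z^n$: the naive monomial substitution $x_i\mapsto z^{Te_i}$ turns $f(P;x)$ into the pushforward $\dsum_{\alpha\in P\cap\Z^n}z^{T\alpha}$, which counts each $\beta\in S$ with multiplicity equal to the size of its fiber $T^{-1}(\beta)\cap P\cap\Z^n$. The whole problem is therefore to replace these multiplicities by $1$, that is, to pass from a generating function with repetitions to the generating function of the underlying support $S$.

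Next I would simplify the geometry of $T$. Passing to the graph $\Gamma=\{(\alpha,T\alpha):\alpha\in P\cap\Z^n\}\subseteq\Z^{n+r}$, the map $\alpha\mapsto(\alpha,T\alpha)$ is a bijection whose inverse is a coordinate projection, so $f(\Gamma;(x,z))$ is obtained from $f(P;x)$ by a single injective monomial substitution and $S$ is the image of $\Gamma$ under the coordinate projection $\R^{n+r}\to\R^r$ forgetting the first $n$ coordinates. Iterating, it suffices to solve the problem for the projection that forgets a single coordinate, say the last one. For this I would fix, inside each fiber, a canonical representative, namely the lattice point with smallest value of the forgotten coordinate, so that the restricted projection becomes a bijection onto its image; after that selection the forgotten variable may be specialized to $1$ without overcounting.

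The canonical-representative selection is the technical core, and I expect it to be the main obstacle. For a fixed base point $\beta$ the fiber is a finite set of integers in the last variable $y$, contributing a polynomial $\dsum_{t\in F_\beta}y^t$, and I must keep only its lowest-order term $y^{\min F_\beta}$. I would carry this out by a Hadamard-type product in the variable $y$ against a suitable explicit series, exactly the operation made algorithmic in Lemma~3.4 of~\cite{barvinok03} and in the proof of Theorem~\ref{theo:intersect}, followed by a residue/limit specialization; this extracts a single monomial per fiber while keeping the number of binomials in each denominator bounded by a constant $s=s(n)$ and the indexing set polynomial in the input size.

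Finally I would compose the pieces. After the single-coordinate step has been iterated $n$ times the remaining map is a bijection onto $S$, and the substitution $x^\alpha\mapsto z^{T\alpha}$ yields $f(S;z)$ with each point counted once. Two points require care and I would check them explicitly: that the intermediate monomial substitutions do not send a denominator $(1-x^{a})$ to $1$, that is, that $a$ is not killed by the relevant projection, which is handled by choosing a generic specialization direction $\ell$ with $\langle\ell,a\rangle\neq 0$ as in condition~\eqref{cond:lemma}; and that the number of Boolean and Hadamard operations stays polynomial, so that both the size of $I$ and the bound $s(n)$ are preserved throughout.
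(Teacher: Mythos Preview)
The paper does not prove this lemma at all: it is stated as a quotation of an external result (labelled ``Theorem 1.7 in \cite{barvinok94}'', though the projection theorem in this algorithmic form is really the main theorem of Barvinok--Woods \cite{barvinok03}) and is used as a black box in the proof of Theorem~\ref{theo:nondominated-srf}. There is therefore nothing in the paper to compare your proposal against.

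That said, your outline is close in spirit to the actual Barvinok--Woods argument: the reduction to forgetting one coordinate via the graph, and the idea of keeping a single canonical representative per fiber so that the final specialization does not overcount, are exactly the right moves. The point where your sketch is too optimistic is the ``technical core''. You propose to extract the lowest-degree monomial of each fiber polynomial $\sum_{t\in F_\beta}y^t$ by a Hadamard product in $y$ against ``a suitable explicit series'', but no single fixed series will do this: the Hadamard product is diagonal in the exponents and cannot by itself detect which exponent is minimal in a set with gaps. Barvinok and Woods instead identify the \emph{non}-minimal points as those $\alpha$ for which some $\alpha'$ in the set has the same projection and a strictly smaller last coordinate; they realise this via an intersection of the set with a translated copy of itself (together with bounding-box arguments to keep everything inside a polytope), and then subtract. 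This shift-and-intersect step, not a bare Hadamard product, is what makes the selection algorithmic while keeping the denominator length bounded by a constant $s(n)$. Your final paragraph about avoiding vanishing denominators and controlling the number of Boolean/Hadamard operations is correct and is indeed part of the bookkeeping in \cite{barvinok03}.
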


To finish this section, we mention the application of short
rational functions to solve single-objective integer programming.
The interested reader is referred to \cite{deloera04,
woods-yoshida05} for further details.

\section{Multiobjective combinatorial optimization problems}
\label{linear}

In this section we present the problem to be solved as well as the
new concept of solutions motivated by the nature of the problem.

A multiobjective integer linear program (MOILP) can be formulated
as:
\begin{align}
 \label{MOILP_gral}
\max &\; (c_1\,x, \ldots,  c_k\,x) =: C\,x\nonumber\\
s.t.&\nonumber\\
& \dsum_{j=1}^d \,a_{ij}\,x_j \leq b_i & i=1, \ldots, m\\
& x_j \in \Z_+ & j=1, \ldots, n\nonumber
\end{align}

with $a_{ij}, b_i$ integers and $x_i$ non negative. Without loss
of generality, we will consider the above problem in its standard
form, i.e., the coefficient of the $k$ objective functions are
non-negative and the constraints are in equation form. In
addition, we will assume that the constraints define a polytope
(bounded) in $\R^n$. Therefore, from now on we deal with
$MOILP_{A,C}(b)$.

It is clear that Problem (\ref{MOILP_gral}) is not a standard
optimization problem since the objective function is a
$k$-coordinate vector, thus inducing a partial order among its
feasible solutions. Hence, solving the above problem requires an
alternative concept of solution, namely the set of nondominated
(or Pareto-optimal) points.

A vector $\widehat{x} \in \R^n$ is said to be a
\textit{nondominated} (or Pareto optimal) solution of
$MIOLP_{A,C}$ if there is no other feasible vector $y$ such that
$$
c_j\,y \geq c_j\,\widehat{x} \qquad \forall j=1, \ldots, k
$$
with at least one strict inequality for some $j$. If $x$ is a
nondominated solution, the vector $Cx = (c_1\,x, \ldots, c_k\,x)
\in \R^k$ is called \textit{efficient}. Note that $X_E$ is a
subset of $\R^n$ (\emph{decision space}) and $Y_E$ is a subset of
$\R^k$ (\emph{objectives space}).

We will say that a dominated point, $y$, is dominated by $x$ if
$c_i\,x \gvertneqq c_i\,y$ for all $i=1,\ldots, k$.\footnote{We
are denoting by $\gvertneqq$ the binary relation ''more than or
equal to" and where it is assumed that at least one of the
inequalities in the list is strict.} We denote by $X_E$ the set of
all nondominated solutions for \eqref{MOILP_gral} and by $Y_E$ the
image under the objective functions of $X_E$, that is, $Y_E =
\{C\,x: x \in X_E\}$.

From the objective function $C$, we obtain a linear partial order
on $\Z^n$ as follows:
$$
x \succ_C y : \Longleftrightarrow C\,x \gvertneqq C\,y
$$
Notice that since $C \in \Z^{k\times n}$, the above relation is
not complete. Hence, there may exist non-comparable vectors. We
will use this partial order, induced by the objective functions of
problem (\ref{MOILP_gral}) as the input for the multiobjective
integer programming algorithm developed in this paper.

Sometimes, the same efficient value is the image of several
nondominated solutions. At this point, different problems can be
tackled. We say that two nondominated solutions, $x^1$ and $x^2$
are equivalent if $C\,x^1 = C\,x^2$. Then, the solutions for
$MOILP_{A,C}(b)$ are one of the following:
\begin{itemize}
\item {\it Complete set}: A subset $X \subseteq X_E$ such that for
all $y \in Y_E$ there is $x\in X$ with $C\,x=y$.
\item {\it Minimal complete set}: A complete set with no
equivalent solutions.
 \item {\it Maximal complete set}: All equivalent solutions.
\end{itemize}
Through this paper, we are looking for the entire set of
nondominated solutions, equivalently the maximal complete set for
$MOILP_{A,C}$.

\section{A short rational function expression of the entire set of nondominated solutions}

We present in this section an algorithm for solving
$MOILP_{A,C}(b)$ using Barvinok's rational functions technique.

\begin{theo}
\label{theo:nondominated-srf} Let $A \in \Z^{m\times n}$, $b \in
\Z^m$, $C =(c_1, \ldots, c_k) \in \Z^{k\times n}$, $J \in \{1,
\ldots, n\}$, and assume that the number of variables $n$ is
fixed. Suppose $P = \{x \in \R^n : A\,x \leq b, x \geq 0\}$ is a
rational convex polytope in $\R^n$. Then, we can encode, in
polynomial time, the entire set of nondominated solutions for
$MOILP_{A,C}(b)$ in a short sum of rational functions.
\end{theo}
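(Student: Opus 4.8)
The plan is to start from Barvinok's generating function of the whole feasible set and then subtract off the points that are dominated. By Theorem~\ref{theo:barvinok}, since $n$ (hence any fixed multiple of it) is fixed, we can compute in polynomial time the short rational function
$$
f(P;z) = \dsum_{x\in P\cap\Z^n} z^x .
$$
The set we are after is $X_E = (P\cap\Z^n)\setminus D$, where $D$ is the set of feasible integer points that are dominated by some other feasible integer point. The whole argument therefore reduces to producing a short rational function for $D$.

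The key observation is that the domination relation, although it involves the disjunction ``at least one strict inequality'', can be encoded by a \emph{single} polytope in the doubled space $\R^{2n}$. Writing a generic point of $\R^{2n}$ as $(x,y)$ with $x,y\in\R^n$, I would consider
$$
S=\Big\{(x,y)\in\R^{2n}: x\in P,\ y\in P,\ c_i(y-x)\ge 0\ \ (i=1,\dots,k),\ \dsum_{i=1}^k c_i(y-x)\ge 1\Big\}.
$$
Because $A$ and $C$ are integral and we only retain lattice points, for $(x,y)\in S\cap\Z^{2n}$ every $c_i(y-x)$ is a nonnegative integer, so the last constraint $\sum_i c_i(y-x)\ge 1$ holds if and only if some $c_j(y-x)\ge 1$, that is, if and only if $Cy\gvertneqq Cx$. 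Hence $S\cap\Z^{2n}$ is exactly the set of pairs $(x,y)$ with $x,y$ feasible and $y$ strictly dominating $x$. This is where integrality does the real work: a condition that would otherwise force a union over the $k$ objectives (and therefore an inclusion--exclusion exponential in $k$) collapses to one linear inequality, so only $n$ --- and not $k$ --- needs to be fixed. Note that $S$ is bounded (it sits inside $P\times P$) and has $2m+2n+k+1$ facets, a number polynomial in the input, so Theorem~\ref{theo:barvinok} applies in the fixed dimension $2n$.

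Next I would project onto the first block of coordinates. Let $T:\R^{2n}\to\R^n$ be the linear map $T(x,y)=x$; clearly $T(\Z^{2n})\subseteq\Z^n$. Then
$$
T\big(S\cap\Z^{2n}\big)=\{x\in P\cap\Z^n:\ \exists\,y\in P\cap\Z^n \text{ with } Cy\gvertneqq Cx\}=D,
$$
and Lemma~\ref{lemma:proj} produces, in polynomial time for the fixed dimension $2n$, a short rational function $f(D;z)=\dsum_{x\in D}z^x$. I expect this projection to be the main obstacle: it is the deepest of the tools invoked, and its force is precisely that it returns the \emph{image set}, listing each dominated $x$ once regardless of how many $y$ dominate it. A naive alternative --- computing the rational function of $S$ and specialising the $y$-variables to $\mathbf 1$ --- would count each $x$ with the multiplicity of its dominators and would run into the pole at $\mathbf 1$, so the genuine integer-projection result of Lemma~\ref{lemma:proj} is essential here.

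Finally, since $D\subseteq P\cap\Z^n$, the generating functions satisfy $f(P;z)=f(X_E;z)+f(D;z)$ as formal power series, whence
$$
f(X_E;z)=f(P;z)-f(D;z).
$$
The right-hand side is a difference of two short sums of rational functions and is therefore itself a short sum of rational functions, obtained simply by concatenating the terms of $f(P;z)$ with the sign-reversed terms of $f(D;z)$. Every step above runs in polynomial time once $n$ is fixed, and the output encodes exactly the maximal complete set $X_E$ (equivalent nondominated solutions survive, since none strictly dominates another), which proves the theorem.
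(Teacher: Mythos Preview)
Your proof is correct and follows essentially the same route as the paper: build the ``domination pairs'' polytope in $\R^{2n}$ using the single aggregated inequality $\sum_i c_i(y-x)\ge 1$ to capture strict dominance over integer points, apply the integer-projection Lemma~\ref{lemma:proj} to obtain the short rational function of the dominated set, and subtract it from $f(P;z)$. The only cosmetic difference is the labeling of the two blocks (the paper lets the first block dominate the second and projects onto the second, whereas you do the reverse); your write-up is otherwise a faithful and, in places, more explicit version of the paper's argument.
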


\begin{proof}

Using Barvinok's algorithm, compute the following generating
function in $2n$ variables:
\begin{equation}
\label{rf_tf} f(x,y) := \dsum_{(u,v) \in P_{C} \cap \Z^{2n}}
x^u\,y^v
\end{equation}
where $P_{C} = \{(u,v) \in \Z^n \times \Z^n : u, v \in P, c_i\,u -
c_i\,v \geq 0 \text{ for all $i=1, \ldots, k$ and} \dsum_{i=1}^k
c_i\,u - \dsum_{i=1}^k c_i\,v \geq 1\}$. $P_C$ is clearly a
rational polytope. For fixed $u \in \Z^n$, the $y$-degrees,
$\alpha$, in the monomial $x^u\,y^{\alpha}$ of $f(x,y)$ represent
the solutions dominated by $u$.

 Now, for any function $\varphi$, let $\pi_{1, \varphi}, \pi_{2, \varphi}$ be the projections of $\varphi(x,y)$ onto the $x$- and $y$-variables, respectively. Thus $\pi_{2,f}(y)$
 encodes all
dominated feasible integral vectors (because the degree vectors of
the $x$-variables dominate them, by construction), and it can be
computed from $f(x, y)$ in polynomial time by Lemma
\ref{lemma:proj}.

Let $V(P)$ be the set of extreme points of the polytope $P$ and
choose an integer $R \geq \max\{v_i: v\in V(P), i=1,\ldots,n\}$
(we can find such an integer $R$ via linear programming). For this
positive integer, $R$, let $r(x,R)$ be the rational function for
the polytope $\{u \in \R^n_+: u_i\leq R\}$, its expression is:
$$
r(x,R)= \dprod_{i=1}^n \left( \dfrac{1}{1-x_i} +
\dfrac{x_i^R}{1-x_i^{-1}}\right).
$$

Define $f(x,y)$ as above, $\pi_{2,f}(x)$ the projection of $f$
onto the second set of variables as a function of the
$x$-variables and $F(x)$ the short generating function of $P$.
They are computed in polynomial time by Lemma \ref{lemma:proj} and
Theorem \ref{theo:barvinok} respectively. Compute the following
difference:
$$
h(x) := F(x) - \pi_{2,f}(x).
$$

This is the sum over all monomials $x^u$ where $u \in P$ is a
nondominated solution, since we are deleting, from the total sum
of feasible solutions, the set of dominated ones.

This construction gives us a short rational function associated
with the sum over all monomials with degrees being the
nondominated solutions for $MOILP_{A,C}(b)$. As a consequence, we
can compute the number of nondominated solutions for the problem.
The complexity of the entire construction being polynomial since
we only use polynomial time operations among four short rational
functions of polytopes (these operations are the computation of
the short rational expressions for $f(x,y)$, $r(x,R)$ and
$\pi_{2,f}(x)$).
\end{proof}

\begin{remark}
To prove the above result one can use a different approach to
compute the nondominated solutions assuming that there exists a
polynomially bounded (for fixed dimension) feasible lower bound
set, $L$, for $MOILP_{A,C}(b)$, i.e., a set of feasible solutions
such that every nondominated solution is either one element in
$L$, or it dominates at least one the elements in $L$.

First, compute the following operations with generating functions:
$$
H(x,y) = f(x,y) - f(x,y)*(\pi_{2,f}(x)\,r(y,R))
$$

This is the sum over all monomials $x^u\,y^v$ where $u, v \in P$,
$u$ is a nondominated solution and $v$ is dominated by $u$. In
$H(x,y)$, each nondominated solution, $u$, appears as many times
as the number of feasible solutions that it dominates.

Next, compute a feasible lower bound set (see
\cite{fernandez-puerto03, ehrgott-gandibleux07}), $L=\{\alpha_1,
\ldots, \alpha_s\}$. This way the set of nondominated solutions is
encoded using the following construction:

Let $RLB^i(x,y)$ be the following short sum of rational functions
$$
RLB^i(x,y) = H(x,y) \ast (y^{\alpha_i}\,r(x,R)) \qquad i=1,
\ldots, s.
$$

Taking into account that for each $i$, the element $y^{\alpha_i}$
is common factor for $RLB^i(x,y)$ and it is the unique factor
where the $y$-variables appear, we can define $ND^i(x) =
\dfrac{RLB^i(x,y)}{y^{\alpha_i}}, i=1, \ldots, s$, to be the sum
of rational functions that encodes the nondominated solutions that
dominate $\alpha_i$, $i=1, \ldots, s$. Therefore, the entire set
of nondominated solutions for $MOILP_{A,C}(b)$ is encoded in the
short sum of rational functions $ND(x)= \dsum_{i=1}^k\,ND^{i}(x)$.
\end{remark}

\section{Digging algorithm for the set of nondominated solutions of MOILP}
\label{sec:digging}

Section 3 proves that encoding the entire set of efficient
solutions of MOILP can be done in polynomial time for fixed
dimension. This is a compact representation of the solution
concept. Nevertheless, one may be interested in an explicit
description of this list of points. This task could be performed,
by expanding the short rational expression which is ensured by
Theorem \ref{theo:nondominated-srf}, but it would require the
implementation of all operations used in the proof. As far we
know, they have never been efficiently implemented.

An alternative algorithm for enumerating the nondominated
solutions of a multiobjective integer programming problem, which
uses rational generating functions, is the digging algorithm. This
algorithm is an extension of a heuristic proposed by Lasserre
\cite{lasserre03} for the single-objective case.

Let $A, C$ and $b$ be as in Problem (\ref{MOILP_gral}), and assume
that $P=\{ x \in \R^n : Ax \leq b, x\geq 0\}$ is a polytope. Then,
by Theorem \ref{theo:barvinok}, we can compute a rational
expression for $f(P;z) = \sum_{\alpha \in P\cap\Z^n} z^\alpha$ in
the form
$$
f(P;z) = \dsum_{i\in I} \varepsilon_i
\dfrac{z^{u_i}}{\dprod_{j=1}^n (1-z^{v_{ij}})}
$$
in polynomial time for fixed dimension, $n$. Each addend in the
above sum will be referred to as $f_i$, $i\in I$.

If we make the substitution $z_i = z_i\,t_1^{c_{1i}} \cdots
t_k^{c_{ki}}$, in the monomial description we  have $f(P;z, t_1,
\ldots, t_k) = \dsum_{\alpha \in P \cap \Z^n}
z^\alpha\,t_1^{c_1\alpha}\cdots t_k^{c_k\alpha}$, where $c_1,
\ldots, c_k$ are the rows in $C$. It is clear that for enumerating
the entire set of nondominated solutions, it would suffice to look
for the set of leader terms, in the $t$-variables, in the partial
order induced by $C$, $\succ_C$, of the multi-polynomial $f(P;z,
t_1, \ldots, t_k)$. After the above changes we have:
\begin{equation}
\label{eq:changes} f(P;z, t_1, \ldots, t_k) = \dsum_{i\in I}
f_i(P;z, t_1, \ldots, t_k),
\end{equation}
where $ f_i(P;z, t_1, \ldots, t_k):= \varepsilon_i
\dfrac{z^{u_i}\,t_1^{c_1u_i}\cdots t_k^{c_ku_i}}{\dprod_{j=1}^n
(1-z^{v_{ij}}\,t_1^{c_1v_{ij}}\cdots t_k^{c_kv_{ij}})}.$ Now, we
can assume, wlog, that $c_1\,v_{ij}$ is negative or zero. If it
were zero, then we could assume that $c_2\,v_{ij}$ is negative.
Otherwise, we would repeat the argument until the first non zero
element is found (it is assured that this element exists,
otherwise the factor would not appear in the expression of the
short rational function). Indeed, if the first non zero element
were positive, we would make the change:
$$
\dfrac{1}{1-z^{v_{ij}}\,t_1^{c_1v_{ij}}\cdots t_k^{c_kv_{ij}}} =
\dfrac{-z^{-v_{ij}}\,t_1^{-c_1v_{ij}}\cdots
t_k^{-c_kv_{ij}}}{1-z^{-v_{ij}}\,t_1^{-c_1v_{ij}}\cdots
t_k^{-c_kv_{ij}}}
$$
and the sign of the $t_1$-degree would be negative.

With these assumptions, the multivariate Laurent series expansion
for each rational function, $f_i$,  in $f(P;z,t_1, \ldots, t_k)$
is {\tiny $$ \varepsilon_i z^{u_i}\,t_1^{c_1u_i}\cdots
t_k^{c_ku_i} \dprod_{j=1}^d \dsum_{\lambda=0}^{\infty}z^{\lambda
v_{ij}}t_1^{\lambda c_1v_{ij}}\cdots t_k^{\lambda c_kv_{ij}} =
\varepsilon_i z^{u_i}\,t_1^{c_1u_i}\cdots t_k^{c_ku_i}
\dprod_{j=1}^d (1+z^{v_{ij}}t_1^{c_1v_{ij}}\cdots t_k^{c_kv_{ij}}
+ z^{2 v_{ij}}t_1^{2c_1v_{ij}}\cdots t_k^{2c_kv_{ij}} + \cdots )
$$}

The following result allows us to develop a finite algorithm for
solving $MOILP_{A,C}(b)$ using Barvinok's rational generating
functions.

\begin{lemma}\label{lemma:bounds}
Obtaining the entire set of nondominated solutions for a MOILP
requires only an explicit finite, polynomially bounded (in fixed
dimension) number of terms of the long sum in the Laurent
expansion of $f(P;z, t_1, \ldots, t_k)$.
\end{lemma}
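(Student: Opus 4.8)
The plan is to exploit the boundedness of $P$ to turn the formally infinite Laurent expansion into a finite, explicitly truncated sum, and then to read the nondominated frontier off the surviving monomials. First I would record the consequences of $P$ being a polytope. As in the proof of Theorem \ref{theo:nondominated-srf}, linear programming yields an integer $R$ with $P \subseteq \{u : 0 \le u_i \le R\}$, so every feasible point $\alpha \in P \cap \Z^n$ has $z$-exponent in the box $[0,R]^n$ and objective vector in $\prod_{j=1}^k [0, N_j]$ with $N_j = \dmax_{x \in P} c_j x$ (again computable by LP, of polynomial bit-size for fixed $n$). Consequently $f(P; z, t_1,\ldots,t_k)$ is in fact a Laurent polynomial: only monomials $z^\alpha t_1^{c_1\alpha}\cdots t_k^{c_k\alpha}$ with $\alpha \in P \cap \Z^n$ survive, and each has bounded degree in every variable. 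This is the qualitative finiteness; the work is to make it quantitative, summand by summand.

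Next I would truncate each addend $f_i$ of \eqref{eq:changes} separately. A term of the multivariate Laurent series of $f_i$ is indexed by $\lambda = (\lambda_1,\ldots,\lambda_n) \in \Z_{\ge 0}^n$ and carries the exponent $\gamma = u_i + \dsum_{j=1}^n \lambda_j v_{ij}$ in the $z$-variables (and $C\gamma$ in the $t$-variables). Any such term with $\gamma \notin [0,R]^n$ corresponds to no feasible solution, so it cannot lie in the support of the full sum and may be discarded outright. Hence, for the purpose of recovering $P \cap \Z^n$, only those $\lambda$ matter for which $u_i + \dsum_{j=1}^n \lambda_j v_{ij}$ lands in the box; call this set $\Lambda_i$. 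The sign normalization arranged before the statement (the first nonzero among $c_1 v_{ij}, \ldots, c_k v_{ij}$ is negative, equivalently the generators $v_{ij}$ lie in an open halfspace $\langle \ell, \cdot\rangle < 0$) makes the cone they generate pointed, so $\|\dsum_j \lambda_j v_{ij}\| \to \infty$ as $\|\lambda\| \to \infty$; intersecting with the bounded box forces $\Lambda_i$ to be finite. Thus each $f_i$ need only be expanded to a finite, explicitly computable order, and by Theorem \ref{theo:barvinok} there are only polynomially many indices $i$.

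Finally I would assemble the truncated expansions: summing the contributions of the polynomially many $f_i$ at each $\gamma \in [0,R]^n$ yields the net coefficient, which is $1$ exactly on $P \cap \Z^n$ and $0$ elsewhere, so the $\succ_C$-maximal $t$-exponents among the surviving monomials are precisely the efficient vectors and their $z$-exponents the nondominated solutions. To run this as a digging procedure one processes the surviving monomials in decreasing order of $t$-degree in $\succ_C$, retaining a monomial as nondominated iff it is not dominated by one already output. The main obstacle is the quantitative claim: proving that $\dsum_i |\Lambda_i|$, the number of Laurent terms one actually generates, is bounded by a polynomial for fixed $n$ rather than merely finite. This reduces to bounding, uniformly in $i$, the number of lattice points of the polytope $\{\lambda \in \R_{\ge 0}^n : u_i + \dsum_j \lambda_j v_{ij} \in [0,R]^n\}$, which requires controlling the interplay between the box size $R$ and the cone generators $v_{ij}$, and, if one wants a genuinely polynomial count rather than one polynomial in the output size, ordering the expansion so that dominated terms are never produced. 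Establishing this bound, together with the verification that discarding out-of-box terms never removes a surviving monomial, is the heart of the argument.
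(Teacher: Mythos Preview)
Your outline captures the right architecture---truncate each Laurent series $f_i$ to a finite index set, sum, and read off the $\succ_C$-maxima---but it stops precisely where the lemma's content lies. You yourself flag that ``establishing this bound \ldots\ is the heart of the argument,'' and then do not establish it. As written, you have shown only that each $\Lambda_i$ is finite (via the pointed-cone observation), not that $\sum_i |\Lambda_i|$ is polynomial in fixed dimension; the lemma asserts the latter, so the proof is incomplete.

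The paper closes this gap directly, and with a slightly different truncation set. Rather than requiring the $z$-exponent $u_i+\sum_j\lambda_j v_{ij}$ to land in the box $[0,R]^n$, the paper requires the $t$-exponents to be nonnegative, i.e.\ it works with
\[
P_i=\Bigl\{\lambda\in\Z^n_+:\ c_s u_i+\textstyle\sum_{r}\lambda_r\,c_s v_{ir}\ge 0,\ s=1,\ldots,k\Bigr\},
\]
which is legitimate because in the assumed standard form every feasible point has nonnegative objective values. The sign normalization arranged just before the lemma (for each $j$ the first nonzero among $c_1v_{ij},\ldots,c_kv_{ij}$ is negative) guarantees $P_i$ is bounded. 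One then sets $M_{ij}=\max\{\lambda_j:\lambda\in P_i\}$, bounds $M_{ij}$ by $O(M^{2n+1})$ via Cramer's rule applied to the defining system (the entries $c_s v_{ij}$, $c_s u_i$ have size controlled by $M$), and concludes that the number of terms $\prod_{j=1}^n(M_{ij}-m_{ij})$ is polynomial for fixed $n$. Your box-based set $\Lambda_i$ can be bounded by exactly the same Cramer argument applied to $0\le u_i+\sum_j\lambda_j v_{ij}\le R$, so your route is salvageable; but you need to actually carry out that step rather than defer it.
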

\begin{proof}
Let $i\in I$, $j \in \{1, \ldots, n\}$ and define $P_{i}=\{\lambda
\in \Z^n_+ : c_s u_i + \dsum_{r=1}^n \lambda_r\,c_{s}\,v_{ir} \geq
0, s=1, \ldots, k\}$, $M_{ij} = \max\{ \lambda_j : \lambda \in
P_i\}$ and $m_{ij} = \min\{ \lambda_j : \lambda \in P_i\}$.
$M_{ij}$ and $m_{ij}$ are well-defined because $P_i$, defined
above, is non empty and bounded since, by construction, for each
$j \in \{1, \ldots, n\}$ there exists $s \in \{1, \ldots, k\}$
such that $c_s\,v_{ij} < 0$.

Then, it is enough to search for the nondominated solutions in the
finite sum
$$
\varepsilon_i z^{u_i}\,t_1^{c_1u_i}\cdots t_k^{c_ku_i}
\dprod_{j=1}^d \dsum_{\lambda=m_{ij}}^{M_{ij}}t_1^{\lambda
c_1v_{ij}}\cdots t_k^{\lambda c_kv_{ij}}.
$$

Let $U$ (resp. $l$) be the greatest (resp. smallest) value that
appears in the non-zero absolute values of the entries in $A$,
$b$, $C$. Set $M=\max \{U, l^{-1}\}$. First, $m_{ij}\geq 0$. Then,
by applying Cramer's rule one can see that $M_{ij}$ is bounded
above by $O(M^{2n+1})$. Thus, the explicit number of terms in the
expansion of $f_i$, namely $\dprod_{j=1}^n \lfloor M_{ij} -
m_{ij}\rfloor$, is polynomial, when the dimension, $n$ is fixed.
\end{proof}

The digging algorithm looks for the leader terms in the
$t$-variables, with respect to the partial order induced by $C$.
At each rational function (addends in the above sum
(\ref{eq:changes})) multiplications are done in lexicographical
order in their respective bounded hypercubes. If the $t$-degree of
a specific multiplication is not dominated by one of the previous
factors, it is kept in a list; otherwise the algorithm continues
augmenting lexicographically the lambdas. To simplify the search
at each addend, the following consideration can be taken into
account: if $t_1^{\alpha_o+\sum_j\lambda_j\alpha^1_j}\cdots
t_k^{\alpha_o+\sum_j\lambda_j\alpha^k_j}$ is dominated, then any
term of the form $t_1^{\alpha_o+\sum_j\mu_j\alpha^1_j}\cdots
t_k^{\alpha_o+\sum_j\mu_j\alpha^k_j}$, $\mu$ being componentwise
larger than $\lambda$, is dominated as well.

The above process is done on each rational function that appears
in the representation of $f$. As an output we get a set of leader
terms (for each rational function), that are the candidates to be
nondominated solutions. Terms that appear with opposite signs will
be cancelled. Removing terms in the list of candidates (to be
nondominated solutions) implies consideration of those terms that
were dominated by the cancelled ones. These terms are included in
the current list of candidates and the process continues until no
more terms are added.

At the end, some dominated elements may appear in the union of the
final list. Deleting them in a simple cleaning process gives the
list that contains only the entire set of nondominated solutions
for the multiobjective problem.

Algorithm \ref{alg:diggingbiobj} details the pseudocode of the
digging algorithm.

{\sffamily
\begin{algorithm}[!h]
\label{alg:diggingbiobj} \SetLine \SetKwInOut{Input}{input}
\SetKwInOut{Output}{output}

\Input{$A \in \Z^{m\times n}$, $b \in \Z^m$, $C \in \Z^{k\times n}$}

\textbf{Step 1: (Initialization)}

 Compute, $f(z)$, the short sum
of rational functions encoding the set of nondominated solutions
of $MOILP_{A,C}(b)$. The number of rational function is indexed by
$I$.

Make the substitution $z_i = z_i\,t_1^{c_{1i}} \cdots
t_k^{c_{ki}}$ in $f(z)$. Denote by $f_i$, $i \in I$, each one of
the addends in $f$, as in \eqref{eq:changes}.

Set $m_{ij}$ and $M_{ij}$, $j=1, \ldots, n$, the lower and upper
bounds computed in the proof of Lemma \ref{lemma:bounds} and
$\mathcal{S} = \dprod_{j=1}^n [m_{ij}, M_{ij}] \cap \Z^n_+$. Set
$\Gamma_i:=\{\}$, $i \in I$, the initial set of nondominated
solutions encoded in $f_i$.

\textbf{Step 2: (Nondominance test)}

\Repeat{ \Indm{\sffamily No changes in any $\Gamma_i$ are done for
all $i \in I$}}{

\For{$i \in I$}{

\For{ $\lambda^i \in \mathcal{S}$ such that its entries are not
componentwise larger than a previous $\lambda$}{

Compute $p_i:=z^{w_o}\,t_1^{w_1}\cdots t_k^{w_k}$, being $w_o:=u_i
+ \,\dsum_{j=1}^n\,\lambda^i_j\,v_{ij}$ and
$w_h:=c_1\,u_i + \,\dsum_{j=1}^n\,\lambda^i_j\,c_h\,v_{ij} \qquad h=1, \ldots, k$\\

\lIf{$p$ is nondominated by elements in $\Gamma_i$}{$\Gamma_i
\leftarrow \Gamma_i \cup \{p\}$}}}

\textbf{Step 3: (Feasibility test)}

\For{$s, r \in I$, $s<r$}{

\lIf{$p \in \Gamma_j \cap \Gamma_h$, $\varepsilon_j =
-\varepsilon_h$}{$\Gamma_j \leftarrow \Gamma_j \setminus \{p\}$;
$\Gamma_h \leftarrow \Gamma_h\setminus \{p\}$}}

}

Set $\Gamma:=\dbigcup_j \Gamma_j$. Remove from $\Gamma$ the
dominated elements.

\Output{The entire set of nondominated solutions for
$MOILP_{A,C}(b)$: $\Gamma$}

 \caption{Digging algorithm for multiobjective problems}
\end{algorithm}}

Recall that $M=\max \{U, l^{-1}\}$, where $U$ is the greatest
 value that appears in the non-zero absolute values of the entries in $A$, $b$, $C$ and $l$ is the least value
among these values.

Taking into account Lemma \ref{lemma:bounds} and the fact that
Algorithm \ref{alg:diggingbiobj} never cycles, we have the
following statement.

\begin{theo}
\label{theo:M} Algorithm \ref{alg:diggingbiobj} computes in a
finite (bounded on $M$) number of steps, the entire set of
nondominated solutions of the multiobjective Problem
(\ref{MOILP_gral}).
\end{theo}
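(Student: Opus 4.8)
The plan is to split the statement into two claims: (i) the algorithm halts after a number of steps bounded by a function of $M$, and (ii) upon halting it returns exactly the maximal complete set $X_E$.

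For termination I would first invoke Lemma \ref{lemma:bounds}. For each $i \in I$ the relevant exponents $\lambda^i$ range over the finite hypercube $\mathcal{S} = \prod_{j=1}^n [m_{ij}, M_{ij}] \cap \Z^n_+$, whose cardinality $\prod_{j=1}^n \lfloor M_{ij} - m_{ij} \rfloor = O(M^{\,n(2n+1)})$ is bounded in terms of $M$, since each $M_{ij} = O(M^{2n+1})$ and $m_{ij} \geq 0$. Hence every $\Gamma_i$ is a subset of a fixed finite pool of at most $|I|\cdot|\mathcal{S}|$ monomials. Each pass of the Repeat--Until loop either inserts a monomial into some $\Gamma_i$ (Step 2) or deletes a cancelling pair (Step 3); a cancelled pair is genuinely absent from the true expansion, so it is never reinserted. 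Combining this with the already-noted fact that the algorithm never cycles, so that no global configuration of the lists $\Gamma_i$ recurs, and with the finiteness of the configuration space, the loop must stop after a number of iterations bounded by the size of that space, that is, bounded by a function of $M$.

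For correctness I would argue that, after the substitution $z_i = z_i\,t_1^{c_{1i}}\cdots t_k^{c_{ki}}$, each monomial of $f(P;z,t_1,\ldots,t_k)$ carries $t$-degree $(c_1\alpha,\ldots,c_k\alpha) = C\alpha$ for its feasible point $\alpha$, so maximality in $\succ_C$ of a surviving $t$-degree is exactly nondominance of the corresponding $\alpha$. Lemma \ref{lemma:bounds} guarantees that truncating to the hypercubes $\mathcal{S}$ loses no monomial whose $t$-degree can be a leader, so no nondominated objective vector is ever missed. The delicate point is the interplay between the signed rational pieces and the leader extraction: a monomial may be a leader of $f_i$ yet be cancelled by an equal monomial of opposite sign from some $f_r$ (Step 3), because the short rational function represents the indicator of $P\cap\Z^n$ only after these signed contributions are netted out. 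I would show that every genuine feasible point has total coefficient $+1$ and hence cannot be annihilated, so no nondominated solution is destroyed by cancellation; conversely, each cancellation forces reconsideration of the terms it previously dominated, which is precisely the mechanism driving the loop until no $\Gamma_i$ changes.

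Finally I would close with the cleaning step. After the loop, $\Gamma = \bigcup_i \Gamma_i$ contains every nondominated solution together with, possibly, some points that are leaders within an individual $f_i$ but are dominated by leaders contributed by other pieces; removing the dominated elements of $\Gamma$ in one comparison pass yields exactly $X_E$. The main obstacle is this correctness half, specifically certifying that the sign cancellations of Step 3 never remove a monomial of net coefficient $+1$ and that the cascade of reconsiderations they trigger terminates with the full leader set intact; the termination bound itself is a fairly direct consequence of Lemma \ref{lemma:bounds} and the non-cycling property.
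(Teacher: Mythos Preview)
Your proposal is correct and follows the same approach as the paper, which in fact offers no formal proof at all: the theorem is stated immediately after the sentence ``Taking into account Lemma \ref{lemma:bounds} and the fact that Algorithm \ref{alg:diggingbiobj} never cycles, we have the following statement.'' Your termination argument (finite hypercubes from Lemma \ref{lemma:bounds}, finite configuration space, non-cycling) and your correctness argument (leaders in the $t$-degrees correspond to nondominated points, genuine feasible points carry net coefficient $+1$ and so survive cancellation, the final cleaning pass removes spurious per-piece leaders) simply flesh out what the paper leaves implicit; the only ingredient you take on faith---that the algorithm never cycles---is exactly what the paper also asserts without justification.
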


It is well known that enumerating the nondominated solutions of
MOILP is NP-hard and $\#$P-hard (\cite{ehrgott00,
ehrgott-gandibleux04}). Thus, one cannot expect to have very
efficient algorithms for solving the general problem (when the
dimension is part of the input).

In the following, we concentrate on a different concept of
complexity that has been already used in the literature for
slightly different problems. Computing maximal independent sets on
graphs is known to be $\#$P-hard (\cite{garey-johnson79}),
nevertheless there exist algorithms for obtaining these sets which
ensure that the number of operations necessary to obtain two
consecutive solutions of the problem is bounded by a polynomial in
the problem input size (see e.g. \cite{tsukiyama79}). These
algorithms are called polynomial delay. Formally, an algorithm is
said \emph{polynomial delay} if the delay, which is the maximum
computation time between two consecutive outputs, is bounded by a
polynomial in the input size (\cite{arimura-uno05, johnson88}).

In our case, a polynomial delay algorithm, in fixed dimension, for
solving a multiobjective linear integer program means that once
the first nondominated solution is computed, either in polynomial
time a next nondominated solution is found or the termination of
the algorithm is given as an output.

Next, we present a polynomial delay algorithm, in fixed dimension,
for solving multiobjective integer linear programming problems.
This algorithm combines the theoretical construction of Theorem
\ref{theo:nondominated-srf} and a digging process in the Laurent
expansion of the short rational functions of the polytope
associated with the constraints of the problem.\medskip\\

\hspace*{1cm}The algorithm proceeds as follows.

Let $f(z)$ be the short rational function that encodes the
nondominated solutions (by Theorem \ref{theo:nondominated-srf},
the complexity of computing $f$ is polynomial -in fixed
dimension-). Make the changes $z_i = z_i\,t_1^{c_{1i}} \cdots
t_k^{c_{ki}}$, for $i \in I$, in $f$. Denote by $f_i$ each of the
rational functions of $f$ after the above changes. Next, the
Laurent expansion over each rational function, $f_i$, is done in
the following way: (1) Check if $f_i$ contains nondominated
solutions computing the Hadamard product of $f_i$ with $f$. If
$f_i$ does not contain nondominated solutions, discard it and set
$I:=I\backslash\{i\}$ (termination); (2) if $f_i$ encodes
nondominated solutions, look for an arbitrary nondominated
solution (expanding $f_i$); (3) once the first nondominated
solution, $\alpha$, is found, check if there exist more
nondominated solutions encoded in the same rational function
computing $f \ast (f_i - z^\alpha\,t_1^{c_1\alpha}\cdots
t_k^{c_k\alpha})$. If there are more solutions encoded in $f_i$,
look for them in $f_i - z^\alpha\,t_1^{c_1\alpha}\cdots
t_k^{c_k\alpha}$. Repeat this process until no new nondominated
solutions can be found in $f_i$.

The process above describes the pseudocode written in Algorithm
\ref{alg:poldelay}. {\sffamily
\begin{algorithm}[h]
\label{alg:poldelay} \SetLine \SetKwInOut{Input}{input}
\SetKwInOut{Output}{output}

\Input{$A \in \Z^{m\times n}$, $b \in \Z^m$, $C \in \Z^{k\times n}$}
\Output{The entire set of nondominated ($X_E$) and efficient
($Y_E$) solutions for $MOILP_{A,C}(b)$}

Set $X_E = \{\}$ and $Y_E=\{\}$.

\textbf{Step 1: } Compute, $f(z)$, the short sum of rational
functions encoding the set of nondominated solutions of
$MOILP_{A,C}(b)$. The number of rational functions is indexed by
$I$.

Make the substitution $z_i = z_i\,t_1^{c_{1i}} \cdots
t_k^{c_{ki}}$ in $f(z)$. Denote by $f_i$, $i \in I$, each one of
the addends in $f$ ($f = \dsum_{i\in I} f_i$).

\textbf{Step 2: } For each $i \in I$, check $f_i\ast f$. If the
set of lattice points encoded by this rational function is empty,
do $I \leftarrow I\setminus \{i\}$.

\textbf{Step 3: }\\
\noindent\While{$I \neq \emptyset$}{\For{$i \in I$}{ Look for the
first nondominated solution,
$\alpha$, that appears in the Laurent expansion of $f_i$.\\
Set $X_E \leftarrow X_E \cup \{\alpha\}$ and $Y_E \leftarrow Y_E
\cup \{C\,\alpha\}$.

Set $f_i \leftarrow f_i - z^\alpha\,t_1^{c_1\alpha}\cdots
t_k^{c_k\alpha}$\\
and check if $f \ast f_i$ encodes lattice points. If it does not
encode lattice points, discard $f_i$ ($I \leftarrow I\setminus
\{i\}$) since $f_i$ does not encode any other nondominated point,
otherwise repeat. }}

\caption{A polynomial delay algorithm for solving MOILP}
\end{algorithm}}

\begin{theo}
Assume $n$ is a constant. Algorithm \ref{alg:poldelay} provides a
polynomial delay procedure to obtain the entire set of
nondominated solutions of $MOILP_{A,C}(b)$.
\end{theo}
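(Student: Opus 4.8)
The plan is to verify the three conditions that constitute polynomial delay---polynomial time before the first output, polynomial time between any two consecutive outputs, and polynomial time from the last output until termination---treating the number of variables $n$ as a fixed constant throughout. First I would dispatch the preprocessing in Steps 1 and 2: the short rational function $f(z)$ is produced in polynomial time by Theorem \ref{theo:nondominated-srf}, its index set $I$ has polynomial cardinality by Theorem \ref{theo:barvinok}, and the substitution $z_i = z_i\,t_1^{c_{1i}}\cdots t_k^{c_{ki}}$ merely rewrites exponent vectors. I would also note that the sign-normalization carried out just before Lemma \ref{lemma:bounds} guarantees every denominator factor has a strictly negative leading $t$-exponent, so that the Laurent expansion of each $f_i$ is well defined and the bounds of Lemma \ref{lemma:bounds} apply. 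The emptiness tests in Step 2 reduce to a Hadamard product $f_i\ast f$ (polynomial by Theorem \ref{theo:intersect}) followed by a lattice-point count obtained as $\dlim_{z\to\mathbf{1}}$ via the residue corollary of Theorem \ref{theo:barvinok}; since $|I|$ is polynomial, all of this runs in polynomial time, bounding the work preceding the first output.

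Next I would bound the delay inside the main loop of Step 3. Between two consecutive outputs the algorithm expands the Laurent series of the current $f_i$ until it meets its first nondominated monomial $\alpha$, emits $\alpha$ and $C\alpha$, subtracts $z^\alpha\,t_1^{c_1\alpha}\cdots t_k^{c_k\alpha}$ from $f_i$, and recomputes $f\ast f_i$ to test for emptiness. The key observation is that by Lemma \ref{lemma:bounds} only the terms lying in the box $\dprod_{j=1}^n[m_{ij},M_{ij}]$ can carry a nondominated solution, and the number of such terms, $\dprod_{j=1}^n\lfloor M_{ij}-m_{ij}\rfloor$, is polynomial in the input for fixed $n$. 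Thus at most polynomially many candidate monomials are generated, each by polynomial integer arithmetic on vectors of polynomially bounded size, and the nondominance test for a candidate is again a Hadamard product plus a lattice-point count---both polynomial. The subtraction and the subsequent emptiness test add only one further Hadamard product and one counting step, so the whole delay is polynomial.

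For termination I would argue that $I$ shrinks by one whenever the test $f\ast f_i$ reports no lattice point, and that each such test is a single polynomial-time Hadamard product followed by a count; since $|I|$ is polynomial, the interval from the last output to the halt is polynomial. I would also invoke the non-cycling property already established for Theorem \ref{theo:M}: each pass of the inner loop either outputs a fresh nondominated monomial, which is immediately removed from $f_i$ and so cannot recur from that $f_i$, or removes an index from $I$; hence after finitely many outputs $I$ is exhausted and the entire nondominated set has been enumerated.

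I expect the main obstacle to be the per-output analysis of \emph{locating} the first nondominated term in the expansion of $f_i$: without a uniform bound on how many dominated or mutually cancelling terms must be scanned beforehand, a single delay could blow up, and it is precisely Lemma \ref{lemma:bounds} that rescues the argument by confining the search to a polynomial-size box. The secondary delicate point will be justifying that every nondominance and emptiness test genuinely reduces to the Barvinok--Woods Hadamard-product-and-count machinery, which requires checking that the hypotheses of the underlying Hadamard lemma (Lemma 3.4 in \cite{barvinok03}) continue to hold after the variable substitution, exactly as in the construction preceding Lemma \ref{lemma:bounds}.
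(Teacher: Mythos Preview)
Your proposal is correct and follows essentially the same approach as the paper: invoke Theorem~\ref{theo:nondominated-srf} for the polynomial-time construction of $f$, use Lemma~\ref{lemma:bounds} to bound the number of terms in the Laurent expansion of each $f_i$ by a polynomial, and reduce every emptiness and nondominance check to a Hadamard product (Theorem~\ref{theo:intersect}) followed by a lattice-point count. Your write-up is more explicit than the paper's own proof---you separate out the three phases of the polynomial-delay definition, note the polynomial bound on $|I|$, and flag the sign-normalization needed for the Laurent expansion---but the underlying argument is the same.
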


\begin{proof}
Let $f$ be the rational function that encodes the nondominated
solutions of $MOILP_{A,C}(b)$. Theorem \ref{theo:nondominated-srf}
ensured that $f$ is a sum of short rational functions that can be
computed in polynomial time.

Algorithm \ref{alg:poldelay} digs separately on each one of the
rational functions $f_i$, $i \in I$,that define $f$. (Recall that
$f= \dsum_{i\in I} f_i$).

Fix $i\in I$. First, the algorithm checks whether $f_i$ encodes
some nondominated solutions. This test is doable in polynomial
time by Theorem \ref{theo:intersect}. If the answer is positive,
an arbitrary nondominated solution is found among those encoded in
$f_i$. This is done using digging and the Intersection Lemma.
Specifically, the algorithm expands $f_i$ on the hyperbox
$\dprod_{j=1}^n [m_{ij}, M_{ij}] \cap \Z^n$ and checks whether
each term is nondominated. The expansion is polynomial, for fixed
$n$, since the number of terms is polynomially bounded by Lemma
\ref{lemma:bounds}. The test is performed using the Hadamard
product of each term with $f$.

The process is clearly a polynomial delay algorithm. We use
digging separately on each rational function $f_i$ that encodes
nondominated points. Thus, the time necessary to find a new
nondominated solution from the last one is bounded by the
application of digging on a particular $f_i$ which, as argued
above, is polynomially bounded.
\end{proof}

Instead of the above algorithm one can use a binary search
procedure to solve multiobjective problem using short generating
functions. In the worst case, digging algorithm may need to expand
every nonnegative term to obtain the set of nondominated
solutions. Therefore, as it is stated in Theorem \ref{theo:M}, the
number of steps to solve the problem can be polynomially bounded
on $M$. With a binary search approach, the number of steps to
obtain consecutive solutions of our problem decreases to a number
polynomially bounded on $log(M)$. A binary search approach was
already used in \cite{deloera08}. Here, the novelty is that our
analysis does not require to fix the dimension of the objective
space whereas in \cite{deloera08} it was required.

The process is as follows. Let $M$ be defined as above. By
construction $P\subseteq [0,M]^n$. We proceed by dividing the
hypercube $[0,M]^n$ into $2^n$ hypercubes of smaller dimensions,
and recursively repeating the division process over those
hypercubes containing at least one nondominated solution (until
only one solution is included in each element of the partition),
whereas those hypercubes that at a given stage of the process do
not contain nondominated solutions are discarded for any further
consideration.

The division process is done by bisecting each dimension. Testing
for nondominated solutions on a given hypercube (at any stage of
the process) is always done using the same tool based on Theorem
\ref{theo:nondominated-srf}. That result allows us to construct,
in polynomial time in fixed dimension, the function $h(x)$ that
encodes all nondominated solutions. Moreover, it is easy to see
that the short rational function that encodes the integer points
in the hypercube $\mathcal{H} = \dprod_{i=1}^n [m_i, M_i]$, with
$m_i, M_i \in \Q$, $i=1, \ldots, n$, is:
$$
r_\mathcal{H}(x)=\dprod_{i=1}^n\big[\dfrac{x_i^{m_i}}{1-x_i} +
\dfrac{x_i^{M_i}}{1-x_i^{-1}}\big]
$$
Thus, the Hadamard product, $h(x)\ast r_\mathcal{H}(x)$ encodes
the subset of nondominated solutions that lie in $\mathcal{H}$;
and hence by Barvinok's theory we can also count, in polynomial
time, the number of integer points encoded by $h(x)\ast
r_\mathcal{H}(x)$ (Lemma 3.4 in \cite{barvinok03}).

The elements in our search space (hypercubes) are organized on a
search tree and we use a depth first search strategy. Each node is
a hypercube containing nondominated solutions. Descendants of a
given node are hypercubes obtained bisecting the edges on the
previous one (parent). It is clear that the maximum depth of the
tree is $O(logM)$. The above construction ensures that, provided
that the set of nondominated solutions is nonempty, finding a
first nondominated solution can be done testing at most $O(2^n
logM)$ nodes in the search tree. Since testing a node is
polynomial, in fixed dimension, this operation is polynomial.
Moreover, finding a new nondominated solution from a given one is
also polynomial. Indeed, it consists of backtracking at most
$O(logM)$ nodes until we find a branch containing nondominated
points and then we have to explore, at most, $O(2^n logM)$ nodes;
or detecting that none of the branches contain solutions.

An illustrative example of this procedure is shown in Figure
\ref{tree} where can be seen how the initial hypercube,
$[0,4]\times [0,4]$, is divided successively in  sub-hypercubes,
until an isolated nondominated solution is located in one of them.

\begin{figure}[h]
\includegraphics[scale=0.4]{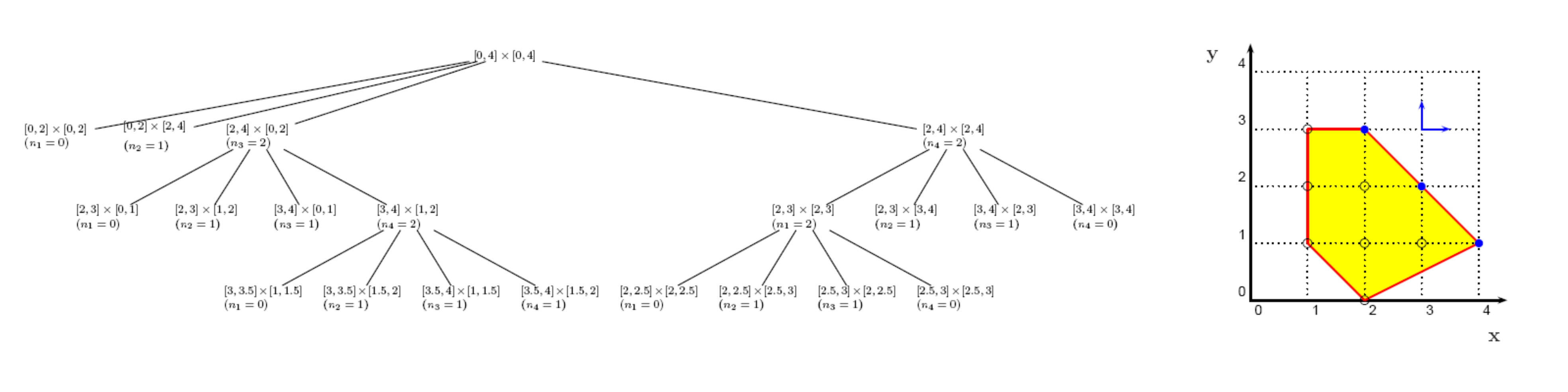}
\caption{\label{tree} Search tree for the problem $v-\max\{(x,y):
x+y \leq 5, x-2y\leq 2, x+y \geq 2, x \geq 1, y \leq 3, x, y \in
\Z_+\}$}
\end{figure}

The finiteness of this procedure is assured since the number of
times that the hypercube $[0,M]^n$ can be divided in $2^n$
sub-hypercubes is bounded by $log(M)$.

The pseudocode for this procedure is shown in Algorithm
\ref{alg:logM}.

{\sffamily
\begin{algorithm}[!h]
\label{alg:logM} \SetLine \SetKwInOut{Input}{input}
\SetKwInOut{Output}{output} \textbf{Initialization: }
$\mathcal{M}=[0,M]^n \subseteq P$.

\textbf{Step 1: } Let $\mathcal{M}_1, \ldots, \mathcal{M}_{2^n}$
be the hypercubes obtained dividing $\mathcal{M}$ by its central
point.

$i=1$

 \textbf{Step 2: }
\Repeat{$i \leq 2^n$}{Count $n_{\mathcal{M}_i}$, the number of
integer points encoded in $r_{\mathcal{M}_i}(x)\ast h(x)$. This is
the number of nondominated solutions in the hypercube
$\mathcal{M}_i$.

\uIf{$n_{\mathcal{M}_i}=0$}{

\lIf{$i<2^n$}{$i\leftarrow i+1$}

\lElse{Go to Step 1 with $\mathcal{M}$ the next element in the
search tree, using depth first search.}}
\uElseIf{$n_{\mathcal{M}_i}=1$ ( and $P \cap \mathcal{M}_i = \{
x^*\}$)}{$ND = ND \cup \{x^*\}$ and $i\leftarrow i+1$.} \Else{ Go
to Step 1 with $\mathcal{M}=\mathcal{M}_i$.}} \caption{}
\end{algorithm}}

\begin{theo}
Assume $n$ is a constant. Algorithm \ref{alg:logM} provides a
polynomial delay (polynomially bounded on $log(M)$) procedure to
obtain the entire set of nondominated solutions of
$MOILP_{A,C}(b)$.
\end{theo}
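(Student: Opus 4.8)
The plan is to establish three facts about Algorithm \ref{alg:logM}: that it terminates after $O(\log M)$ levels of bisection, that its output $ND$ coincides with the entire set of nondominated solutions, and that the number of node-tests performed between two consecutive outputs is polynomially bounded in $\log M$. Throughout I would rely on the function $h(x)$ produced by Theorem \ref{theo:nondominated-srf}, which encodes exactly the nondominated solutions of $MOILP_{A,C}(b)$ and is computable in polynomial time for fixed $n$.

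First I would address correctness and finiteness simultaneously. Since every feasible point satisfies $P \subseteq [0,M]^n$, after bisecting each of the $n$ coordinates $t$ times the side length of every surviving hypercube is $M/2^t$; hence for $t > \log_2 M$ each hypercube contains at most one integer point, and the recursion in Step 2 stops. For correctness, the key observation is that for any hypercube $\mathcal{H} = \dprod_{i=1}^n [m_i, M_i]$ the Hadamard product $h(x) \ast r_{\mathcal{H}}(x)$ encodes precisely the nondominated solutions lying in $\mathcal{H}$, so that the count $n_{\mathcal{M}_i}$ computed by the algorithm equals the number of nondominated solutions inside $\mathcal{M}_i$. This justifies pruning every hypercube with $n_{\mathcal{M}_i} = 0$ (it contains no nondominated point, and neither do its descendants) and recording $x^*$ whenever $n_{\mathcal{M}_i} = 1$. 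A straightforward induction on the depth of the search tree then shows that $ND$ collects each nondominated solution exactly once.

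Second I would carry out the delay analysis on the search tree, whose nodes are the hypercubes that contain at least one nondominated point, whose branching factor is the constant $2^n$, and whose depth is $O(\log M)$ by the bound above. Testing a single node amounts to forming $h(x) \ast r_{\mathcal{H}}(x)$ and counting its lattice points; this is polynomial in the input size for fixed $n$ by Theorem \ref{theo:intersect} together with Barvinok's counting procedure (Lemma 3.4 in \cite{barvinok03}), since the count reduces to evaluating the limit of the short rational function at $\mathbf{1}$. Under the depth-first strategy, reaching a first nondominated solution from the root requires descending one branch while examining the $2^n$ children at each of the $O(\log M)$ levels, i.e.\ at most $O(2^n \log M)$ node-tests. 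To pass from one output to the next, the algorithm backtracks at most $O(\log M)$ levels to the nearest ancestor with an unexplored nonempty child and then descends again, once more visiting at most $O(2^n \log M)$ nodes. For fixed $n$ the factor $2^n$ is constant, so the work between consecutive outputs is $O(\log M)$ node-tests, each polynomial, giving the claimed polynomial delay bounded by a polynomial in $\log M$.

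The main obstacle I anticipate is the delay bookkeeping rather than correctness: one must argue that the depth-first backtracking between two successive singleton hypercubes never traverses more than $O(2^n \log M)$ nodes, i.e.\ that locating the next nonempty branch is absorbed into a single ascent-plus-descent of bounded length instead of a full re-exploration of the tree. This is guaranteed because each node is tested only when first visited, and the count $n_{\mathcal{M}_i}$ tells us immediately whether to prune or to recurse, so no hypercube is ever examined twice; the per-output work is therefore confined to one root-to-leaf correction in a tree of depth $O(\log M)$ and branching $2^n$. The remaining point to verify carefully is that counting the integer points in $h(x) \ast r_{\mathcal{H}}(x)$ is genuinely polynomial in fixed dimension, which is exactly what Theorem \ref{theo:intersect} and the residue computation underlying Barvinok's counting algorithm supply.
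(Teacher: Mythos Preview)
Your proposal is correct and follows essentially the same approach as the paper: the paper's argument (given in the discussion preceding the theorem rather than as a separate proof) likewise bounds the tree depth by $O(\log M)$, uses the Hadamard product $h(x)\ast r_{\mathcal{H}}(x)$ together with Barvinok's counting to test each hypercube in polynomial time, and bounds the delay by at most $O(\log M)$ backtracking steps plus $O(2^n\log M)$ node-tests to reach the next singleton. Your write-up is somewhat more carefully organized (separating correctness/finiteness from the delay analysis and making the ``each node is tested once'' observation explicit), but the ideas and the bounds are identical.
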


\begin{remark}
\label{remark1} The application of the above algorithm to the
single criterion case provides an alternative proof of
polynomiality for the problem of finding an optimal solution of
integer linear problems, in fixed dimension.

Assume that the number of objectives, $k$, is $1$, and that there
exists a unique optimal value for the problem. Applying Theorem
\ref{theo:nondominated-srf} ensures that the optimal solution  of
the problem is found in polynomial time, if the dimension $n$ is
fixed.
\end{remark}

\begin{remark}[Optimization over the set of nondominated
solutions] In practice, a decision maker expects to be helped by
the solutions of the multiobjective problem. In many cases, the
set of nondominated solutions is too large to make easily the
decision, so it is necessary to optimize (using a new criterion)
over the set of nondominated solutions.

With our approach, we are able to compute, in polynomial time for
fixed dimension, a ``short sum of rational
functions"-representation, $F(z)$, of the set of nondominated
solutions of $MOILP_{A,C}(b)$. This representation allows us to
re-optimize with a linear objective, $\nu$, based in the
algorithms for solving single-objective integer programming
problems using Barvinok's functions (see e.g.
\cite{woods-yoshida05}) or the algorithm proposed in Remark
\ref{remark1}. The above discussion proves that solving the
problem of optimizing a linear function  over the efficient region
of a multiobjective problem $MOILP_{A,C}(b)$ is doable in
polynomial time, for fixed dimension.
\end{remark}

\section{Computational Experiments}
For illustrative propposes, a series of computational experiments
have been performed in order to evaluate the behavior of a simple
implementation of the digging algorithm (Algorithm
\ref{alg:diggingbiobj}). Computations of short rational functions
have been done with Latte v1.2 \cite{deloera03a} and Algorithm
\ref{alg:diggingbiobj} has been coded in MAPLE 10 and executed in
a PC with an Intel Pentium 4 processor at 2.66Gz and 1 GB of RAM.
The implementation has been done in a symbolic programming
language, available upon request, in order to make the access easy
for the interested readers.

The performance of the algorithm was tested on randomly generated
instances for biobjective (two objectives) knapsack problems.
Problems from 4 to 8 variables were considered, and for each
group, the coefficients of the constraint were randomly generated
in $[0,20]$. The coefficients of the two objective matrices range
in $[0,20]$ and the coefficients of the right hand side were
randomized in $[20,50]$. Thus, the problems solved are in the
form:
\begin{equation}
 \max\, (c_1, c_2) \,x \quad s.t.\quad a_1x_1 + \cdots + a_n\,x_n \leq
b, x_i \in \Z_+
\end{equation}

The computational tests have been done on this way for each number
of variables: (1) Generate 5 constraint vectors and right hand
sides and compute the shorts rational functions for each of them;
(2) Generate a random biobjective matrix and run digging algorithm
for them to obtain the set of nondominated solutions.

Table \ref{comp:knapsack} contains a summary of the average
results obtained for the considered knapsack multiobjective
problems. The second and third columns show the average CPU times
for each stage in the Algorithm: \verb"srf" is the CPU time for
computing the short rational function expression for the polytope
with LattE and  \verb"mo-digging" the CPU time for running the
multiobjective digging algorithm for the problem. The total
average CPU times are summarized in the \texttt{total} column.
Columns \texttt{latpoints} and \texttt{nosrf} represent the number
of lattice points in the polytope and the number of short rational
functions, respectively. The average number of efficient solutions
that appear for the problem is presented under \texttt{effic}. The
problems have been named as \verb"knapN" where \verb"N" is the
number of variables of the biobjective knapsack problem.

\begin{table}[h]{\small \center{
\begin{tabular}{lrrrrrr}\hline
\texttt{problem} & \texttt{srf} & \texttt{latpoints} &
\texttt{nosrf} & \texttt{mo-digging} & \texttt{effic} &
\texttt{total}\\\hline
\verb"knap4" & 0.018 & 12.25 & 25.75 & 4.863 & 4.5 & 4.881 \\
\verb"knap5" & 0.038 & 31 & 62.5 & 487.640 & 9.25 & 487.678\\
\verb"knap6" & 0.098 & 217.666 & 124.25 & 2364.391 & 7.666 & 2364.489\\
\verb"knap7" & 0.216 & 325 & 203 & 2869.268 & 20 &
2869.484\\
\verb"knap8" & 0.412 & 3478 & 342 & 10245.533 & 46 &
10245.933\\\hline
\end{tabular}
\caption{\label{comp:knapsack}Summary of computational experiments
for knapsack problems}}}
\end{table}

As can be seen in Table \ref{comp:knapsack}, the computation times
are clearly divided into two steps (\texttt{srf} and
\texttt{mo-digging}), being the most expensive the application of
the digging algorithm (Algorithm \ref{alg:diggingbiobj}). In all
cases more than 99\% of the total time is spent expanding the
short rational function using ``digging algorithm''.

The CPU times and sizes in the two steps are highly sensitive to
the number of variables. It is clear that one cannot expect fast
algorithm for solving MOILP, since all these problems are NP-hard
and \#P-hard. Nevertheless, this approach gives exact tools for
solving any MOILP problem, independently of the combinatorial
nature of the problem.

Finally, from our computational experiments, we have detected that
an easy, promising heuristic algorithm could be obtained
truncating the expansion at each rational function. That algorithm
would accelerate the computational times at the price of obtaining
only heuristics nondominated points.

\section*{Acknowledgements} This research has been partially supported by Junta de Andalucía grant number {\scriptsize P06--FQM--01366} and by the Spanish Ministry of Science and Education grant
number {\scriptsize MTM2007--67433--C02--01}. The authors
acknowledge the useful comments received from J. De Loera and M.
Köppe on an earlier version of this paper.

\end{document}